\newtheorem{theorem}{Theorem}[section]
\newtheorem{lemma}[theorem]{Lemma}
\theoremstyle{definition}
\newtheorem{definition}[theorem]{Definition}
\newtheorem{remark}{Remark}
\newcommand{\rr}{\mathbf{R}}
\newcommand{\eff}{{\mathrm{eff}}}
\newcommand{\di}[1]{{\mathrm{div}}\big( #1\big)}
\newcommand{\disp}{\displaystyle}
\newcommand{\ve}{\varepsilon}
\newcommand{\ii}[1]{\int_{#1}}
\newcommand{\Oe}{\Omega_\varepsilon}
\newcommand{\Se}{\Sigma_\varepsilon}
\newcommand{\Ge}{\Gamma_\varepsilon^\pm}
\newcommand{\be}{\begin{equation}}
\newcommand{\ee}{\end{equation}}
\newcommand{\ba}{\begin{array}}
\newcommand{\ea}{\end{array}}
\title[]{Two-scale convergence in thin domains with locally periodic rapidly oscillating boundary}
\keywords{Two-scale convergence, singular measure, homogenization, thin domain with varying thickness, oscillating boundary, dimension reduction.}
\thanks{}
\g@addto@macro{\endabstract}{\@setabstract}
\newcommand{\authorfootnotes}{\renewcommand\thefootnote{\@fnsymbol\c@footnote}}%
\begin{document}
\maketitle

\begin{center}

  \normalsize
  \authorfootnotes
  Irina Pettersson \textsuperscript{1}
  \par \bigskip
  \textsuperscript{1}UiT The Arctic University of Norway \par
  \bigskip

\today
\end{center}

\begin{abstract}
The aim of this paper is to adapt the notion of two-scale convergence in $L^p$ to the case of a measure converging to a singular one. We present a specific case when a thin cylinder with locally periodic rapidly oscillating boundary shrinks to a segment, and the corresponding measure charging the cylinder converges to a one-dimensional Lebegues measure of an interval. The method is then applied to the asymptotic analysis of  linear elliptic operators with locally periodic coefficients in a thin cylinder with locally periodic rapidly varying thickness.

\end{abstract}

\section{Introduction}
The goal of this paper is twofold. 
First, we want to adapt the classical two-scale convergence (see \cite{Ng-1989}, \cite{Al-1992}, \cite{Zh-2000}) to the case of a asymptotically thin domain. We consider a specific case when the domain has locally periodic rapidly oscillating boundary and shrinks to a segment. Second, we will apply the introduced definition to the asymptotic analysis of a linear elliptic operator with locally periodic coefficients in a thin domain with oscillating thickness.  

The two-scale convergence is a powerful tool that allows us to characterise the leading term of the asymptotics without using asymptotic expansions, that reduces the amount of computations. It can be applied both to linear and nonlinear problems, which makes this method so popular for asymptotic analysis.  In \cite{maruvsic2000two} the authors introduced the notion of the two-scale convergence for thin domains, but their definition does not catch the oscillations in the longitudinal variable. As a consequence, it works for operators with coefficients which are constant in the longitudinal variable.

Boundary value and spectral problems in thin domains are usually treated using the analysis of resolvents (\cite{FrSo-2009}), the method of asymptotic expansions (see for example \cite{ciarlet1979justification}, \cite{Panas-05}, \cite{borisov2010asymptotics}, \cite{MePo-2010}, \cite{Naz-01}, \cite{PeSi-2013}), two-scale convergence (\cite{EnePau-1996}, \cite{maruvsic2000two}, \cite{PaPi-2011}, \cite{PaPe-2015}), $\Gamma$-convergence (\cite{MuSi-1995}, \cite{ansini2001homogenization}, \cite{BrFoFr-2000}, \cite{GaGu-2002}, \cite{bouchitte2007curvature}, \cite{BoMaTr-12}), compensated compactness agrument (\cite{GuMo-2003}), and the unfolding method (\cite{blanchard2008microscopic}, \cite{ArPe-2011}, \cite{ArVi-2017}).  The presented list of works devoted to the homogenization in thin structures is far from being complete, but our primary focus is the case of thin domains with locally periodic rapidly varying thickness, and to our best knowledge the works closely related to our study are \cite{MePo-2010}, \cite{ArPe-2011}, \cite{FrSo-2009}, \cite{borisov2010asymptotics}, and \cite{NaPeTa-2016}. We describe them briefly below. 

The case of periodic rapidly oscillating boundary was considered in \cite{MePo-2010}, where the authors studied the asymptotic behaviour of second-order self-adjoint elliptic operators with periodic coefficients, for different boundary conditions. In \cite{ArPe-2011} the case of a locally periodic rapidly oscillating boundary was addressed, and the authors studied the Neumann boundary value problem for the Laplace operator in a two-dimensional thin domain by means of the unfolding method. 
Spectral asymptotics of the Laplace operator in thin domains with slowly varying thickness were considered in \cite{FrSo-2009}, \cite{borisov2010asymptotics}, \cite{NaPeTa-2016}, where under the Dirichlet boundary conditions the localization of eigenfunctions occur. 

The contribution of the present paper is an adapted notion of the two-scale convergence that covers both thin domains with slowly varying, periodic rapidly oscillating and locally periodic rapidly oscillating boundary. We do not make any restrictions on the dimension of the thin domains in the transverse direction. The method presented can be applied to both boundary value and spectral problems (exactly like the classical two-scale convergence), linear and nonlinear. In the present note we use it for the homogenization of a linear elliptic operator with locally periodic coefficients in a thin domain with locally periodic rapidly oscillating boundary. Our approach is based on the two-scale convergence in spaces with measure introduced in \cite{BouFra-01}, \cite{Zh-2000}. It was introduced for the case of a scaled periodic measure, while in the present work we focus on a measure converging to a singular one. The proofs of the basic facts about the properties of the $L^p$-spaces and the two-scale convergence itself follow the lines of those in  \cite{Zh-2000}. 

The paper is organized as follows. In Section \ref{spaces} we define the domain and introduce the corresponding spaces with measure charging this domain. In Section \ref{two-scale} we introduce the adapted two-scale convergence and discuss its properties. Section \ref{linear-case} concerns with the application of the method to the asymptotic analysis of a linear elliptic operator with locally periodic coefficients (see Theorem \ref{th:main}). 


\section{Variable spaces with singular measure in a cylinder with locally periodic rapidly oscillating boundary}
\label{spaces}

We are going to adapt the notion of the two-scale convergence to the case when a thin domain has a rapidly oscillating boundary modulated by some (slowly) varying function.

In what follows the points in $\rr^d$ are denoted by $x=(x_1, x')$,  and $I=(-L,L)$. 
We denote
\begin{align*}
Q(x_1, y_1)= \{y'\in\rr^{d-1}: F(x_1,y_1,y')>0\},
\end{align*}
where $F(x_1,y_1,y')$ satisfies the conditions
\begin{itemize}
\item[(F1)]
$F(x_1, y_1, y') \in C^{1, \alpha}(\overline I \times \overline I \times \rr^{d-1})$ is periodic with respect to $y_1$.

\item[(F2)]
$F + |\nabla_{y}F|\neq 0$, that is $F$ cannot have maximum/minimum where it vanishes.
\item[(F3)]
$F\big|_{y_1=0}=1$, $F\big|_{\pm L} \le 0$.
\item[(F4)]
$Q(x_1, y)$ is simply connected.
\end{itemize}


Now let $\varepsilon>0$ be a small parameter.
We are going to work in a thin cylinder 
\begin{align*}
\Oe = \{x=(x_1,x'): x_1 \in I, x' \in \ve Q(x_1,\frac{x_1}{\ve})\}.
\end{align*}

An example of $\Oe$ is presented in Figure \ref{fig:rod} for three different values of $\ve$. 

\begin{figure}
\includegraphics[trim = 1in 3in 1in 3in, clip, width=0.8\textwidth]{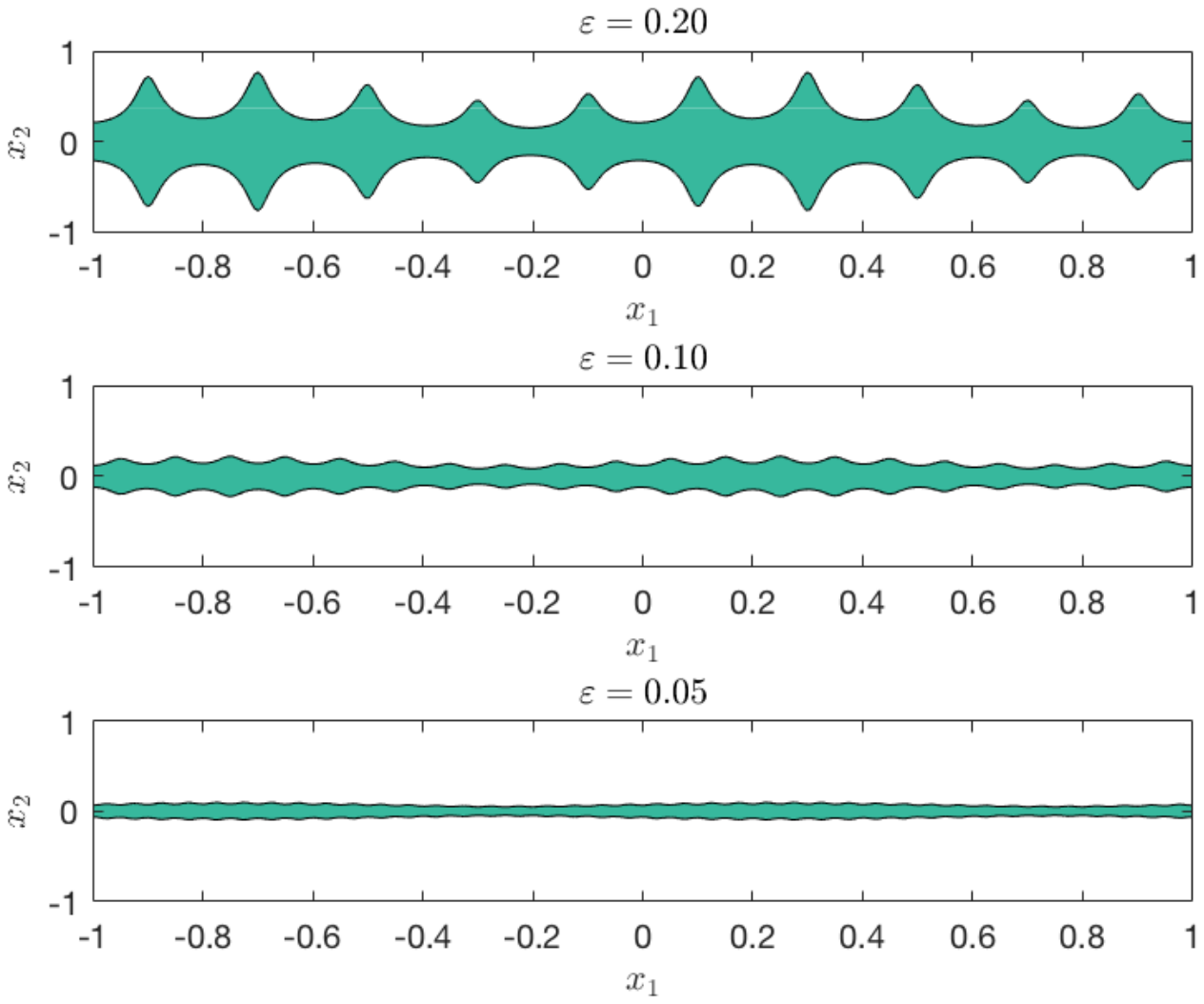}
\caption[caption]{\small{Thin cylinder generated by\\ \mbox{$F(x_1, y_1, y_2)=2+\sin(2\pi x_1)-y_2^2\cdot (1+4\ve\cos(2\pi y_1))$}.}}
\label{fig:rod}
\end{figure}

Here $Q(x_1,\frac{x_1}{\ve})$ describes the locally periodically varying cross section of the cylinder (periodicity with respect to the second variable is inherited from $F$).
The boundary of $\Oe$ consists of the lateral boundary of the cylinder 
\begin{align*}
\Se = \{x=(x_1,x'): x_1 \in I,  F(x_1,\frac{x_1}{\ve},\frac{x'}{\ve})=0\},
\end{align*}
and the bases $\Ge=\{\pm L\} \times (\ve Q(\pm L, \pm L/\ve))$.

The periodicity cell depending on $x_1$ is
\begin{align*}
\Box(x_1) = \{y=(y_1,y'): y_1 \in \mathbf T^1, y' \in Q(x_1,y_1)\},
\end{align*}
where $\mathbf T^1$ is a one-dimensional torus.

Since $F(x_1,y_1, y')$ is periodic in $y_1$, the boundary of $\Box(x_1)$ is $\partial \Box(x_1) = \{y=(y_1,y'): y_1 \in \mathbf T^1, F(x_1,y_1,y')=0\}$.








We define a Radon measure on $\rr^d$ by
\begin{equation}
\label{def:measure}
d \mu_\ve =  \ve^{-(d-1)} \chi_{\Oe}(x) \, dx, 
\end{equation}
where $\chi_{\Oe}(x)$ is the characteristic function of the thin cylinder $\Oe$; $dx$ is the $d$-dimensional Lebesgue measure.

The factor  $\ve^{-(d-1)}$ in \eqref{def:measure} makes the measure of the cylinder $\Oe$ of order $1$.

\begin{lemma}
\label{lm:conv-measures}
The $\mu_\ve$ defined by (\ref{def:measure}) converges weakly, as $\ve \to 0$,  to the measure $\mu_\ast$ defined by
$$
d \mu_\ast= |\Box(x_1)| \chi_I(x_1)  dx_1 \times \delta(x').
$$ 
\end{lemma}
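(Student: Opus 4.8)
The plan is to establish weak-$\ast$ convergence of Radon measures directly: fix a test function $\varphi\in C_0(\mathbf R^d)$ (by the uniform compactness of the supports of the $\mu_\ve$ one may equally take $\varphi\in C_c(\mathbf R^d)$) and check that $\int_{\mathbf R^d}\varphi\,d\mu_\ve\to\int_{\mathbf R^d}\varphi\,d\mu_\ast$. Unwinding the definition of $\mu_\ast$, one has $\int\varphi\,d\mu_\ast=\int_I\varphi(x_1,0)\,|\Box(x_1)|\,dx_1$, so the goal is
\[
\ve^{-(d-1)}\int_{\Oe}\varphi(x)\,dx\ \longrightarrow\ \int_I\varphi(x_1,0)\,|\Box(x_1)|\,dx_1 .
\]
First I would slice $\Oe$ over $x_1\in I$ and, on each slice, change variables $x'=\ve y'$; since the $x_1$-section of $\Oe$ is precisely $\ve\,Q(x_1,x_1/\ve)$ and $dx'=\ve^{d-1}\,dy'$, the prefactor $\ve^{-(d-1)}$ cancels and
\[
\int_{\mathbf R^d}\varphi\,d\mu_\ve=\int_I\Big(\int_{Q(x_1,x_1/\ve)}\varphi(x_1,\ve y')\,dy'\Big)dx_1 .
\]

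The second step is to freeze the transverse argument of $\varphi$. By the standing assumptions the sections $Q(x_1,y_1)$ are uniformly bounded, say contained in a fixed ball $B_R$, and $g(x_1,y_1):=|Q(x_1,y_1)|$ is bounded on $\overline I\times\mathbf T^1$, say by $M$. Since $\varphi$ is uniformly continuous with modulus of continuity $\omega_\varphi$, on $Q(x_1,x_1/\ve)$ one has $|\varphi(x_1,\ve y')-\varphi(x_1,0)|\le\omega_\varphi(\ve R)$, hence
\[
\Big|\int_{\mathbf R^d}\varphi\,d\mu_\ve-\int_I\varphi(x_1,0)\,g\big(x_1,\tfrac{x_1}{\ve}\big)\,dx_1\Big|\ \le\ 2L\,M\,\omega_\varphi(\ve R),
\]
which tends to $0$ as $\ve\to0$.

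The third step is the passage to the limit in $\int_I\varphi(x_1,0)\,g(x_1,x_1/\ve)\,dx_1$. Here I would invoke the classical averaging (mean value) property of locally periodic functions: if $g\in C(\overline I\times\mathbf T^1)$ is $1$-periodic in the second variable, then $g(\cdot,\cdot/\ve)\rightharpoonup\bar g(\cdot):=\int_{\mathbf T^1}g(\cdot,y_1)\,dy_1$ weakly-$\ast$ in $L^\infty(I)$ (prove it first for finite sums $\sum_k a_k(x_1)b_k(y_1)$ via the Riemann--Lebesgue lemma for periodic functions, then conclude by density). Testing against the $L^1(I)$ function $x_1\mapsto\varphi(x_1,0)$ and recalling that $\int_{\mathbf T^1}|Q(x_1,y_1)|\,dy_1=|\Box(x_1)|$ by the definition of the cell $\Box(x_1)$, we obtain
\[
\int_I\varphi(x_1,0)\,g\big(x_1,\tfrac{x_1}{\ve}\big)\,dx_1\ \longrightarrow\ \int_I\varphi(x_1,0)\,|\Box(x_1)|\,dx_1 ,
\]
and combining the three steps finishes the proof.

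The only genuine difficulty I expect is the input required for the averaging step, namely the continuity of $(x_1,y_1)\mapsto|Q(x_1,y_1)|$ and the uniform boundedness of the sections. This is exactly what hypotheses (F1)--(F2) are designed to supply: writing $g(x_1,y_1)=\int_{\mathbf R^{d-1}}\chi_{\{F(x_1,y_1,\cdot)>0\}}(y')\,dy'$, the $C^{1,\alpha}$ regularity of $F$ together with the transversality condition $F+|\nabla_y F|\neq0$ keep the level set $\{F(x_1,y_1,\cdot)=0\}$ of $(d-1)$-Lebesgue measure zero for a.e. $y_1$ and prevent it from jumping, so dominated convergence (with dominating function $\chi_{B_R}$) yields continuity of $g$; while (F3) forces $g$ to vanish at $x_1=\pm L$, which is consistent with the factor $\chi_I$ in $\mu_\ast$. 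If one prefers to relax (F1) to mere measurability of $g$ in $x_1$, the same conclusion follows from the Carath\'eodory version of the averaging lemma, so the argument is stable.
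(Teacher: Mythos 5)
Your proof is correct, and it reaches the limit by a somewhat different route in the key step. The opening is identical to the paper's: slice $\Oe$ over $x_1$, rescale $x'=\ve y'$ so that $\ve^{-(d-1)}$ cancels, and use the uniform continuity of $\varphi$ to replace $\varphi(x_1,\ve y')$ by $\varphi(x_1,0)$. Where you diverge is in handling the remaining oscillation in $x_1$: you reduce to $\int_I\varphi(x_1,0)\,|Q(x_1,x_1/\ve)|\,dx_1$ and invoke the classical weak-$\ast$ averaging lemma for locally periodic functions $g(x_1,x_1/\ve)\rightharpoonup\int_{\mathbf T^1}g(x_1,y_1)\,dy_1$, together with the Fubini identity $\int_{\mathbf T^1}|Q(x_1,y_1)|\,dy_1=|\Box(x_1)|$. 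The paper instead keeps $\varphi$ and $Q$ coupled and performs the averaging by hand: it partitions $I$ into translated periods $I_j^\ve$, freezes the slow variable at a point $\xi_j$ on each period (the ``mean-value theorem'' step), rescales $y_1=x_1/\ve$ over one period, and recognizes a Riemann sum $\sum_j\ve\int_{\Box(\xi_j)}\varphi(\xi_j,\ve y')\,dy$ converging to $\int_I|\Box(x_1)|\varphi(x_1,0)\,dx_1$. The two arguments are equivalent in substance --- the Riemann-sum construction is essentially a self-contained proof of the averaging lemma you cite --- but your version is modular and makes explicit an ingredient the paper uses only implicitly, namely the continuity of $(x_1,y_1)\mapsto|Q(x_1,y_1)|$, which you correctly extract from (F1)--(F2) via the fact that the transversality condition keeps the level set $\{F=0\}$ of $(d-1)$-dimensional measure zero. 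Both proofs also share the same implicit reliance on the uniform boundedness of the cross-sections $Q(x_1,y_1)$, which you state as a standing assumption and the paper acknowledges only in a remark; this is not a gap relative to the paper.
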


\begin{proof}
Let $\varphi \in C_0(\rr^d)$.
Then
\begin{align*}
 \int_{\rr^d} \varphi(x) \, d\mu_\ve(x)
 =
\int_{I} \ve^{-(d-1)}\int_{\ve Q(x_1,x_1/\ve)} \varphi(x) dx' dx_1.
\end{align*}
Rescaling $y'=x'/\ve$ gives
\begin{align*}
 \int_{\rr^d} \varphi(x) \, d\mu_\ve(x)
 =
\int_{I} \int_{Q(x_1,x_1/\ve)} \varphi(x_1,\ve y') dy' dx_1.
\end{align*}
Let us divide the interval $I$ into small subintervals (translated periods) $I_j^\ve = \ve [0,1)+\ve j$, $j\in \mathbf Z$. On each such interval we use the mean-value theorem choosing a point $\xi_j$ and get
\begin{align*}
\sum_{j} \int_{I_j^\ve} \int_{Q(x_1,x_1/\ve)} \varphi(x_1,\ve y') dy' dx_1
=\sum_{j} \int_{I_j^\ve} \int_{Q(\xi_j,x_1/\ve)} \varphi(\xi_j,\ve y') dy' dx_1. 
\end{align*}
Since $Q(x_1, y_1)$ is periodic with respect to $y_1$, rescaling $y_1=x_1\ve$ yeilds
\begin{align*}
\sum_{j} \int_{\mathbf T^1} \int_{Q(\xi_1,y_1)} \varphi(\xi_j,\ve y') dy' dy_1
=
\sum_{j}  \ve\int_{\Box(\xi_j)} \varphi(\xi_j,\ve y') dy.
\end{align*}
The last sum is a Riemann sum converging, as $\ve \to 0$, to the following integral
\begin{align*}
\sum_{j}  \ve\int_{\Box(\xi_j)} \varphi(\xi_j,\ve y') dy &\to 
\ii{I}  \ii{\Box(x_1)} \varphi(x_1,0) \,dy dx_1 \\
& = \ii{I} |\Box(x_1)| \varphi(x_1,0) = \ii{\rr^d} \varphi(x) \, d\mu_\ast.
\end{align*}

Note that, for any $x_1 \in I$, due to the continuity of $F$, $|x'| \le C \ve^{d-1}$. Given $\gamma > 0$, we can choose $\ve$ small enough such that $x' \in \ve Q$ implies $|\varphi(x_1,0) - \varphi(x)| < \gamma$ using the uniform continuity of $\varphi$.
\end{proof}

\begin{remark}
We assume that the cylinder is bounded, but all the argument apply to the case when it grows in the $x_1$ direction, as $\ve \to 0$. The arguments are valid if the cylinder has uniformly bounded thickness. In the case of a cylinder growing in $x_1$, as $\ve \to 0$, the limit measure is $d\mu_\ast = |\Box(x_1)|dx_1 \times \delta(x')$.
\end{remark}
\begin{remark}
Note that the geometry of the boundary of the periodicity cell is of no importance in Lemma \ref{lm:conv-measures}.
\end{remark}

For any $\ve$ and $1<p<\infty$, the space of Borel measurable functions $g: \rr^d \to \rr$ such that
\[
\int_{\rr^d} |g|^p\, d\mu_\ve < \infty,
\]
is denoted by $L^p(\rr^d, \mu_\ve)$. For vector functions $g: \rr^d \to \rr^d$ we denote the corresponding space by $L^p(\rr^d, \mu_\ve)^d$.

\begin{definition}
A sequence $u_\ve$ is bounded in $L^p(\rr^d,\mu_\ve)$ if
\begin{align*}
\limsup \limits_{\ve \to 0} \ii{\rr^d} |u_\ve|^p d\mu_\ve < \infty.
\end{align*}
A bounded sequence $u_\ve \in L^p(\rr^d, \mu_\ve)$ is said to converge weakly in $L^p(\rr^d,\mu_\ve)$ to $u \in L^p(\rr^d,\mu_\ast)$ if
\begin{align*}
\lim \limits_{\ve \to 0} \ii{\rr^d} u_\ve \varphi \, d\mu_\ve =  \ii{\rr^d} u \varphi \, d\mu_\ast, \quad \varphi \in C_0^\infty(\rr^d).
\end{align*} 
We say that  $u_\ve \in L^p(\rr^d, \mu_\ve)$ converges strongly to $u \in L^p(\rr^d,\mu_\ast)$ if for any  $v_\ve \in L^{p'}(\rr^d, \mu_\ve)$ weakly converging to  $v \in L^{p'}(\rr^d,\mu_\ast)$, $1/p +1/p'=1$, we have
\begin{align*}
\lim \limits_{\ve \to 0} \ii{\rr^d} u_\ve \, v_\ve \, d\mu_\ve =  \ii{\rr^d} u \, v \, d\mu_\ast.
\end{align*}

\end{definition}

Proofs of the following facts valid for a sequence of measures $\mu_\ve$ weakly convergent to $\mu_\ast$ (no specific assumptions on the structure of $\mu_\ve$), can be found in \cite{Zh-2004}.
\begin{itemize}
\item

The property of weak compactness of a bounded sequence in a separable Hilbert space remains valid with respect to the convergence in variable spaces.
Any bounded sequence in $L^p(\rr^d,\mu_\ve)$ contains a weakly convergent subsequence.
\item
For $u_\ve \in L^p(\rr^d, \mu_\ve)$ weakly converging to $u \in L^p(\rr^d, \mu_\ast)$ the lower semicontinuity property holds:
\begin{align*}
\liminf \limits_{\ve \to 0} \ii{\rr^d} |u_\ve|^p d\mu_\ve \ge \ii{\rr^d} |u|^p d\mu_\ast.
\end{align*}

\item
A sequence $u_\ve \in L^p(\rr^d, \mu_\ve)$ converges strongly to $u \in L^p(\rr^d,\mu_\ast)$ if and only if $u_\ve$ converges to $u$ weakly and
\begin{align*}
\lim\limits_{\ve \to 0} \ii{\rr^d}|u_\ve|^p d\mu_\ve = \ii{\rr^d}|u|^p d\mu_\ast.
\end{align*}  
\end{itemize}

Let us also recall the definition of the Sobolev space with measure.
\begin{definition}
A function $g\in L^p(\rr^d, \mu_\ve)$ is said to belong to the space $W^{1,p}(\rr^d, \mu_\ve)$ if there exists a vector function $z \in L^p(\rr^d, \mu_\ve)^d$ and a sequence $\varphi_k\in C_0^\infty(\rr^d)$ such that
\[
\varphi_k \to g \quad \mbox{in} \,\, L^p(\rr^d,\mu_\ve), \quad k \to \infty,
\]
\[
\nabla \varphi_k \to z \quad \mbox{in} \,\, L^p(\rr^d,\mu_\ve)^d, \quad k \to \infty.
\]
In this case $z$ is called a gradient of $g$ and is denoted by $\nabla^{\mu_\ve} g$.
\end{definition}
Since in our case the measure $\mu_\ve$ is a weighted Lebesgue measure, we have $\nabla^{\mu_\ve}g =\nabla g$ and
the space $W^{1,p}(\rr^d, \mu_\ve)$ is identical to the usual Sobolev space $W^{1,p}(\Oe)$, in contrast to the scaled periodic singular measure considered in \cite{Zh-2000} when the gradient is not unique and is defined up to a gradient of zero.

The spaces $L^2(\rr^d, \mu_\ast)$ and $W^{1,p}(\rr^d, \mu_\ast)$ are defined in a similar way, however the $\mu_\ast$-gradient is not unique and is defined up to a gradient of zero. A zero function might have a nontrivial gradient as it is demostrated by Example 1 in Ch. 3, \cite{Zh-2000}. Following the proof in the last example, one can see that for $p=2$ the subspace of vectors of the form $(0, \psi_2(z_1), \ldots, \psi_d(z_1))$, $\psi_j \in L^2(\mathbf R)$ is the subspace of gradients of zero $\Gamma_{\mu_\ast}(0)$. Any $\mu_\ast$-gradient of $v \in W^{1,2}(\mathbf R^d, \mu_\ast)$ takes the form
\[
\nabla^{\mu_\ast} v(z) = (v'(z_1, 0), \psi_2(z_1), \ldots, \psi_d(z_1)), \quad \psi_j \in L^2(\mathbf R),
\]
where $v'(z_1, 0)$ is the derivative of the restriction of $v(z)$ to $\mathbf R$.


\section{Two-scale convergence in spaces with measure converging to a singular one}
\label{two-scale}
In what follows $\mu_\ve$ denotes the measure given by
\begin{align*}
d \mu_\ve = \chi_{\Oe}(x) \ve^{-(d-1)} \, dx, 
\end{align*}
and $\mu_\ast = |\Box(x_1)| \chi_I(x_1) \, dx_1 \times \delta(x')$ is the limit measure. 

For each $x_1 \in I$, we introduce $C^k(\Box(x_1))$, $L^p(\Box(x_1))$ and $W^{1,p}(\Box(x_1))$ in a usual way. Functions belonging to this spaces are $1$-periodic with respect to $y_1$. 
 
In the present context two-scale convergence is described as follows.
\begin{definition}
We say that $g^\ve\in L^p(\rr^d, \mu_\ve)$, $1<p<\infty$, converges two-scale weakly, as $\ve \to 0$, in $L^p(\rr^d, \mu_\ve)$ if
\begin{enumerate}[(i)]
\item $\limsup_{\ve \to 0} \|g^\ve\|_{L^p(\rr^d, \,\mu_\ve)} \le C$,
\item[$(ii)$]
there exists a function
${g}(x_1,y) \in L^p(I; L^p(\Box(x_1))$ $1$-periodic in $y_1$  such that the following limit relation holds:
\begin{align*}
\lim \limits_{\ve \to 0} \int_{\mathbf{R}^d} g^\ve(x) \, \varphi(x)\, \psi(\frac{x}{\ve}) d\mu_\ve(x) &=
\int_{\mathbf{R}^d} \frac{1}{|\Box(x_1)|}\int_{\Box(x_1)}  {g}(x_1,y)\, \varphi(x)\, \psi(y) \, dy\, d\mu_\ast(x)\\
&=
\int_{\mathbf{R}} \int_{\Box(x_1)} {g}(x_1,y)\, \varphi(x_1,0)\, \psi(y) \, dy\, dx_1,
\end{align*}
for any $\varphi\in C_0^\infty(\mathbf{R}^d)$ and $\psi(y)\in C^\infty(\overline{\Box(x_1)})$ periodic in $y_1$.
\end{enumerate}
We write $g^\ve \overset{2}{\rightharpoonup} g(x_1, y)$ if $g^\ve$ converges two-scale weakly to $g(x_1,y)$ in $L^p(\mathbf{R}^d, \mu_\ve)$. 


\vskip 0.5cm

The definition of the two-scale convergence holds for more general classes of test functions. 
Following the lines of the proof of Lemma \ref{lm:conv-measures} one can see that for $\psi(y) \in L^1(\Box(x_1))$ we have the mean-value property
\begin{align*}
\label{eq:mean-value-property}
\lim \limits_{\ve \to 0} \int_{\mathbf{R}^d} \varphi(x)\, \psi(\frac{x}{\ve}) d\mu_\ve(x) &=
\int_{\mathbf{R}^d} \frac{1}{|\Box(x_1)|}\int_{\Box(x_1)}  \varphi(x)\, \psi(y) \, dy\, d\mu_\ast(x)\\
&=
\int_{\mathbf{R}}  \varphi(x_1,0)\, \Big(\int_{\Box(x_1)} \psi(y) \, dy\Big)\, dx_1.
\end{align*}

For example, as it is shown in Lemma 3.1 in \cite{Zh-2004}, one can take a Caratheodory function $\Phi(x, y)$ such that
\begin{align*}
|\Phi(x,y)|\le \Phi_0(y), \quad \Phi_0 \in L^1({\Box(x_1)}).
\end{align*}
Such test functions are called admissible, and the mean-value property holds
\begin{align*}
\lim \limits_{\ve \to 0} \ii{\rr^d} \Phi(x, \frac{x}{\ve}) d\mu_\ve &= \ii{\rr^d} \frac{1}{|\Box(x_1)|}\int_{\Box(x_1)}  \Phi(x,y) dy d\mu_\ast\\
&= \ii{\rr}\int_{\Box(x_1)}  \Phi(x_1,0,y) dy dx_1.
\end{align*}
The proof of the mean-value property follows the lines of the proof of Lemma 3.1 in \cite{Zh-2004}.
As it was shown in \cite{Al-1992}, the property of continuity with respect to one of the arguments can not be dropped.

\end{definition}

The following compactness result can be proved in the same way as Theorem 4.2 in \cite{Zh-2004}.

\begin{lemma}[Compactness]
\label{lm:compactness-1}
Suppose that $g^\ve$ satisfies the estimate
\[
\limsup_{\ve \to 0}\|g^\ve\|_{L^p(\mathbf{R}^d,\, \mu_\ve)} \le C.
\]
Then $g^\ve$, up to a subsequence, converges two-scale weakly in $L^p(\mathbf{R}^d, \mu_\ve)$ to some function ${g}(x_1,y) \in L^p(\mathbf{R}^d \times \Box(x_1), \mu_\ast \times dy)$.
\end{lemma}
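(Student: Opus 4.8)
The plan is to mimic the classical two-scale compactness argument of Allaire/Nguetseng adapted to variable spaces with singular measure, exactly as in Theorem~4.2 of \cite{Zh-2004}. First I would fix the space of admissible test functions: the pairs $\Phi(x,y) = \varphi(x)\psi(y)$ with $\varphi \in C_0^\infty(\rr^d)$ and $\psi \in C^\infty(\overline{\Box(x_1)})$ $1$-periodic in $y_1$, and more generally Caratheodory functions dominated by an $L^1(\Box(x_1))$ function, for which the mean-value property stated after the definition holds. The key point is that this class is separable in an appropriate norm (it contains a countable dense subset), so a diagonal extraction will be available. For each such $\Phi$ define the linear functional $L_\ve(\Phi) = \int_{\rr^d} g^\ve(x)\,\Phi\big(x,\tfrac{x}{\ve}\big)\,d\mu_\ve(x)$.

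Next I would obtain the uniform bound on $L_\ve$. By Hölder's inequality in $L^p(\rr^d,\mu_\ve)$,
\begin{align*}
|L_\ve(\Phi)| \le \|g^\ve\|_{L^p(\rr^d,\mu_\ve)}\,\Big\|\Phi\big(\cdot,\tfrac{\cdot}{\ve}\big)\Big\|_{L^{p'}(\rr^d,\mu_\ve)},
\end{align*}
and by the mean-value property applied to $|\Phi(x,y)|^{p'}$ (which is again admissible, dominated by $\Phi_0(y)^{p'}\in L^1(\Box(x_1))$ after enlarging $\Phi_0$ suitably — here one uses that $\psi$ is continuous, hence bounded, in the product case), the second factor converges to $\big(\int_{\rr}\tfrac{1}{|\Box(x_1)|}\int_{\Box(x_1)}|\Phi(x_1,0,y)|^{p'}\,dy\,|\Box(x_1)|\,dx_1\big)^{1/p'}$, which is $\|\Phi\|_{L^{p'}(\mu_\ast\times dy/|\Box|)}$ up to the normalisation. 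Combined with $\limsup\|g^\ve\|_{L^p(\rr^d,\mu_\ve)}\le C$ this gives $\limsup_\ve |L_\ve(\Phi)| \le C\,\|\Phi\|_{L^{p'}}$. Now extract, by a diagonal argument over a countable dense family of test functions and using the separability of the relevant $L^{p'}$ space, a subsequence along which $L_\ve(\Phi)$ converges for every $\Phi$ in the dense family, hence (by the uniform bound and density) for every admissible $\Phi$; call the limit $L(\Phi)$. Then $L$ is a bounded linear functional on a dense subspace of $L^{p'}(\mathbf R\times\Box(x_1);\,dx_1\times dy)$ with norm $\le C$, so by Hahn--Banach and the duality $(L^{p'})^\ast = L^p$ (valid since $1<p<\infty$ and the measure is $\sigma$-finite) there is a unique $g(x_1,y)\in L^p(\mathbf R\times\Box(x_1);\,dx_1\times dy)$ with
\begin{align*}
L(\Phi) = \int_{\mathbf R}\int_{\Box(x_1)} g(x_1,y)\,\Phi(x_1,0,y)\,dy\,dx_1 .
\end{align*}
Restricting to product test functions $\Phi=\varphi\psi$ gives precisely condition $(ii)$ of the definition, with the normalisation $1/|\Box(x_1)|$ absorbed into the identification $d\mu_\ast = |\Box(x_1)|\,dx_1\times\delta(x')$; condition $(i)$ is the hypothesis. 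One checks $g$ is $1$-periodic in $y_1$ because every admissible $\psi$ is.

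The main obstacle is the passage from convergence on product test functions to convergence against the full admissible class, and dually the identification of the limit functional with an actual $L^p$ function rather than merely a finitely additive functional — this is where separability of $C^\infty(\overline{\Box(x_1)})$ in the $L^{p'}(\Box(x_1))$ norm (uniformly in $x_1$, using condition (F1) on $F$ which makes $\Box(x_1)$ vary continuously) and the density of $\mathrm{span}\{\varphi(x)\psi(y)\}$ in $L^{p'}(\mathbf R\times\Box(x_1))$ are essential. A secondary technical point is verifying measurability of $g$ jointly in $(x_1,y)$ over the $x_1$-dependent domain $\Box(x_1)$; this follows from the structure of $Q(x_1,y_1)$ as a superlevel set of the $C^{1,\alpha}$ function $F$. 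All remaining steps — the Hölder estimate, the diagonal extraction, and the Riesz representation — are routine and parallel \cite{Zh-2004}.
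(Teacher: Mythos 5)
Your proposal is correct and follows exactly the route the paper intends: the paper gives no proof of its own but refers to Theorem~4.2 of Zhikov's work, whose argument is precisely the one you reconstruct (uniform bound on the functionals $L_\ve$ via H\"older and the mean-value property, diagonal extraction over a countable dense family of admissible test functions, and identification of the limit functional with an $L^p$ function on $\mathbf{R}\times\Box(x_1)$ by duality). The only cosmetic remark is that the extension of the limit functional from the dense subspace needs only continuity, not Hahn--Banach.
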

\begin{definition}
A sequence $g^\ve$ is said to converge two-scale strongly to a function ${g}(x_1, y) \in L^p(\mathbf{R}^d \times \Box(x_1), \mu_\ast \times dy)$ if
\begin{enumerate}[(i)]
\item $g^\ve$ converges two-scale weakly to ${g}(x_1, y)$,
\item the following limit relation holds:
\[
\lim \limits_{\ve \to 0} \int_{\mathbf{R}^d} |g^\ve(x)|^p d\mu_\ve(x) =
\int_{\mathbf{R}^d} \frac{1}{|\Box(x_1)|}\int_{\Box(x_1)}  |{g}(x_1,y)|^p\, dy\, d\mu_\ast(x).
\]
We write $g^\ve \overset{2}{\rightarrow} g(x_1, y)$ if $g^\ve$ converges two-scale strongly to the function $g(x_1,y)$ in $L^p(\mathbf{R}^d, \mu_\ve)$.
\end{enumerate}
\end{definition}

The following properties of the weak two-scale limit hold (see \cite{Zh-2004} for the proof in spaces with measure):
\begin{itemize}
\item
If $u_\ve\overset{2}{\rightharpoonup} u(x_1,y)$ in $L^p(\rr^d, \mu_\ve)$, then $u_\ve$ converges weakly in $L^p(\rr^d, \mu_\ve)$ to the local average of the two-scale limit:
\begin{align*}
u_\ve \rightharpoonup \frac{1}{|\Box(x_1)|}\int_{\Box(x_1)}  u(x_1, y) dy.
\end{align*} 
To see this it is suffices to take a test function independent of $y$ in the definition of the two-scale convergence.

\item
If $u_\ve\overset{2}{\rightharpoonup} u(x_1,y)$ in $L^p(\rr^d, \mu_\ve)$, then the lower semicontinuity property holds
\begin{align*}
\liminf \limits_{\ve \to 0} \ii{\rr^d}|u_\ve|^p d\mu_\ve &\ge \ii{\rr^d} \frac{1}{|\Box(x_1)|}\int_{\Box(x_1)} |u(x_1,y)|^p dy d\mu_\ast \\
& = \ii{\rr} \int_{\Box(x_1)} |u(x_1,y)|^p dy dx_1.
\end{align*}
A proof is based on the Young inequality
\begin{align*}
a\cdot b \le \frac{1}{p}|a|^p + \frac{1}{p'}|b|^{p'}, \quad \frac{1}{p} + \frac{1}{p'}=1.
\end{align*}
For any $\varphi(x_1,y) \in C_0^\infty(\rr; C^\infty(\Box(x_1)))$
\begin{align*}
\frac{1}{p}\ii{\rr^d} |u_\ve|^p d\mu_\ve \ge \ii{\rr^d} u_\ve \varphi(x_1,\frac{x}{\ve})dy d\mu_\ve
-  \frac{1}{p'}\ii{\rr^d} |\varphi(x_1,\frac{x}{\ve})|^{p'} d\mu_\ve.
\end{align*}
Passing to the limit yields
\begin{align*}
\frac{1}{p}\liminf \limits_{\ve \to 0} \ii{\rr^d} |u_\ve|^p d\mu_\ve
& \ge
 \ii{\rr^d} \frac{1}{|\Box(x_1)|}\int_{\Box(x_1)}  u(x_1,y)\varphi(x_1,y)dy d\mu_\ast \\
&- \frac{1}{p'}\ii{\rr^d}\frac{1}{|\Box(x_1)|}\int_{\Box(x_1)}  |\varphi(x_1,y)|^{p'} dy d\mu_\ast.
\end{align*}
By density of smooth functions in $L^p(\rr^d, \mu_\ve)$, we can take $\varphi(x_1,y) = |u(x_1,y)|^{p-2} u(x_1,y)$, which completes the proof.
\end{itemize}

The next proposition provides additional information about the two-scale limit in the case when it is possible to estimate the derivatives. The original statement is given for a fixed domain $\Omega$ and a fixed Lebegue measure in \cite{Al-1992} (Proposition 1.14). The case of a periodic scaled measure $\mu_\ve$ is considered in \cite{ChPiSh-07} (Theorem 10.3). The proof is essentially the same in all these cases and is therefore omitted.

\begin{lemma}
\label{lm:two-scale}
Assume that $u_\ve(x)$ is bounded in $W^{1,p}(\rr^d, \mu_\ve)$, $1\le p < \infty$, and
\begin{align*}
\ii{\rr^d}|u_\ve|^p d\mu_\ve +
\ii{\rr^d}|\nabla u_\ve|^p d\mu_\ve \le C.
\end{align*}
Then there exists $u(x_1) \in W^{1,p}(\rr^d, \mu_\ast)$ and $u_1(x_1,y) \in L^p(\rr; W^{1,p}(\Box(x_1)))$ periodic in $y_1$ such that, as $\ve \to 0$,
\begin{enumerate}[(i)]
\item
$u_\ve$ strongly in $L^p(\rr^d, \mu_\ve)$ and strongly two-scale in $L^p(\rr^d, \mu_\ve)$ converges to $u(x_1) \in L^p(\rr^d, \mu_\ast)$.
\medskip
\item
$\nabla u_\ve$, along a subsequence, weakly two-scale converges to $\nabla^{\mu_\ast} u(x_1) + \nabla_y u_1(x_1,y)$ in $L^p(\rr^d, \mu_\ve)$. 
Here $\nabla^{\mu_\ast} u(x_1)$ is one of the gradients (which are defined up to a gradient of zero) with respect to the measure $\mu_\ast$. 
\end{enumerate}

\end{lemma}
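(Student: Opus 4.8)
The plan is to follow the classical scheme (Proposition~1.14 in \cite{Al-1992}, Theorem~10.3 in \cite{Zh-2004}): extract two‑scale limits of $u_\ve$ and $\nabla u_\ve$ by the compactness Lemma~\ref{lm:compactness-1}, identify the limit of $u_\ve$ by a dimension‑reduction (Poincar\'e + Rellich) argument, and identify the limit of $\nabla u_\ve$ by testing against cell‑wise divergence‑free fields. The two genuinely new features are the oscillating lateral boundary $\Se$, which will produce surface terms in the integrations by parts, and the $x_1$‑dependence of the cell $\Box(x_1)$, which forces the de Rham/Helmholtz decomposition and the density of test fields to be carried out with $x_1$ as a parameter. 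So I would first pass to a subsequence along which $u_\ve\overset{2}{\rightharpoonup}u_0(x_1,y)$ and $\nabla u_\ve\overset{2}{\rightharpoonup}\xi_0(x_1,y)$, with $u_0,\xi_0\in L^p(\rr^d\times\Box(x_1),\mu_\ast\times dy)$.

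For part~(i): let $K\subset\rr^{d-1}$ be a fixed ball contained in every cross section $Q(x_1,y_1)$ (which exists by (F1)--(F3) and compactness of $\overline I\times\mathbf{T}^1$), and set $P_\ve u_\ve(x_1):=|K|^{-1}\int_K u_\ve(x_1,\ve z')\,dz'$. Because $K$ does not move with $x_1$, differentiation under the integral and the scaling of $\mu_\ve$ give $\|P_\ve u_\ve\|_{W^{1,p}(I)}\le C\|u_\ve\|_{W^{1,p}(\rr^d,\mu_\ve)}\le C$, so by Rellich $P_\ve u_\ve\to u=u(x_1)$ strongly in $L^p(I)$ along a further subsequence, with $u\in W^{1,p}(I)$. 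A transverse Poincar\'e inequality on $\ve Q(x_1,x_1/\ve)$, mean taken over $\ve K$ and constant uniform in $(x_1,\ve)$ (here the uniform Lipschitz/connectedness of the cross sections is used), gives $\|u_\ve-P_\ve u_\ve\|_{L^p(\rr^d,\mu_\ve)}\le C\ve\|\nabla_{x'}u_\ve\|_{L^p(\rr^d,\mu_\ve)}\le C\ve\to0$. Combining this with $|Q(x_1,x_1/\ve)|\overset{\ast}{\rightharpoonup}|\Box(x_1)|$ in $L^\infty(I)$ yields $u_\ve\rightharpoonup u$ weakly and $\int|u_\ve|^p\,d\mu_\ve\to\int_I|u|^p|\Box(x_1)|\,dx_1=\int|u|^p\,d\mu_\ast$, hence $u_\ve\to u$ strongly in $L^p(\rr^d,\mu_\ve)$; repeating the computation against $\varphi(x)\psi(x/\ve)$ shows $u_0=u(x_1)$ and that the convergence is two‑scale strong. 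Finally $u\in W^{1,p}(\rr^d,\mu_\ast)$ with $\nabla^{\mu_\ast}u=(u',0,\dots,0)$ follows by approximating $u$ by $\phi_k(x_1)\eta(x')$, $\phi_k\in C_0^\infty(I)$, $\eta\in C_0^\infty$, $\eta(0)=1$, $\nabla\eta(0)=0$.

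For part~(ii): test $\nabla u_\ve$ against $\Psi(x,x/\ve)$, where $\Psi=\Psi(x_1,y)$ is smooth, $1$‑periodic in $y_1$, compactly supported in $x_1\in(-L,L)$, and satisfies $\mathrm{div}_y\Psi(x_1,\cdot)=0$ in $\Box(x_1)$ and $\Psi(x_1,\cdot)\cdot n_y=0$ on $\partial\Box(x_1)$ for every $x_1$. By two‑scale convergence (with $\Psi$ an admissible test function), $\int\nabla u_\ve\cdot\Psi(x,x/\ve)\,d\mu_\ve\to\int_I\int_{\Box(x_1)}\xi_0(x_1,y)\cdot\Psi(x_1,y)\,dy\,dx_1$. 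On the other hand, integrating by parts over $\Oe$: the bulk term loses the dangerous $\ve^{-1}\mathrm{div}_y\Psi$ contribution (because $\mathrm{div}_y\Psi=0$) and converges to $-\int_I u(x_1)\int_{\Box(x_1)}\partial_{x_1}\Psi_1(x_1,y)\,dy\,dx_1$ (using $u_0=u(x_1)$); the $\Ge$‑contribution vanishes since $\Psi$ is supported away from $x_1=\pm L$; and on $\Se$ one has, since the leading part of the normal to $\Se$ is $\nabla_yF/|\nabla_yF|$ and $\Psi\cdot n_y=0$ on $\partial\Box(x_1)$, the expansion $\Psi\cdot n_\ve=\ve\,\dfrac{\partial_{x_1}F}{|\nabla_yF|}\,\Psi_1+O(\ve^2)$, so a two‑scale argument on the boundary together with the strong convergence of $u_\ve$ makes this surface term converge to $\int_I u(x_1)\int_{\partial\Box(x_1)}\Psi_1\,\dfrac{\partial_{x_1}F}{|\nabla_yF|}\,dS\,dx_1$. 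By the Reynolds identity $\frac{d}{dx_1}\int_{\Box(x_1)}\Psi_1\,dy=\int_{\Box(x_1)}\partial_{x_1}\Psi_1\,dy-\int_{\partial\Box(x_1)}\Psi_1\,\frac{\partial_{x_1}F}{|\nabla_yF|}\,dS$, the bulk and $\Se$‑limits combine into $-\int_I u(x_1)\frac{d}{dx_1}\big(\int_{\Box(x_1)}\Psi_1\,dy\big)dx_1$, and one more integration by parts in $x_1$ (using $u\in W^{1,p}(I)$) turns this into $\int_I u'(x_1)\int_{\Box(x_1)}\Psi_1(x_1,y)\,dy\,dx_1$. Equating the two expressions gives
\[
\int_I\int_{\Box(x_1)}\big(\xi_0(x_1,y)-u'(x_1)e_1\big)\cdot\Psi(x_1,y)\,dy\,dx_1=0
\]
for all admissible $\Psi$. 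Transporting a countable dense family of divergence‑free, normal‑trace‑free fields from a reference cell to $\Box(x_1)$ by a Piola transform (which preserves divergence‑freeness) and localizing in $x_1$, I conclude that for a.e.\ $x_1$ the field $\xi_0(x_1,\cdot)-u'(x_1)e_1$ is $L^{p'}(\Box(x_1))^d$‑orthogonal to all such fields, hence by the de Rham/Helmholtz decomposition on the bounded Lipschitz cell $\Box(x_1)$ it equals $\nabla_y u_1(x_1,\cdot)$ for a unique zero‑mean $u_1(x_1,\cdot)\in W^{1,p}(\Box(x_1))$, $1$‑periodic in $y_1$; measurability in $x_1$ and the estimate $\|u_1(x_1,\cdot)\|_{W^{1,p}(\Box(x_1))}\le C\|\xi_0(x_1,\cdot)\|_{L^p(\Box(x_1))}$ give $u_1\in L^p(\rr;W^{1,p}(\Box(x_1)))$. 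This is exactly $\xi_0=\nabla^{\mu_\ast}u+\nabla_yu_1$ with $\nabla^{\mu_\ast}u=(u',0,\dots,0)$.

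The main obstacle is the oscillating lateral boundary $\Se$: contrary to the classical periodic case, the integration by parts in part~(ii) produces a nontrivial surface term, and its treatment requires (a) the precise asymptotics of the normal $n_\ve$ in terms of $\nabla_yF$, (b) a trace/two‑scale estimate for $u_\ve$ on $\Se$, and (c) the observation that the surviving term is exactly the boundary contribution in the Reynolds identity for the $x_1$‑varying cell $\Box(x_1)$ --- it is this cancellation that dictates the choice $\mathrm{div}_y\Psi=0$, $\Psi\cdot n_y|_{\partial\Box}=0$ and makes the effective quantity $\int_{\Box(x_1)}\Psi_1\,dy$ appear. A secondary point is carrying out the Helmholtz decomposition and the density of the test fields measurably in the parameter $x_1$; for this the $C^{1,\alpha}$‑regularity of $F$ and the simple‑connectedness of $Q(x_1,y_1)$ from (F1) and (F4) are used.
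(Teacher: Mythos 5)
The paper does not actually supply a proof of this lemma: it asserts that the argument is ``essentially the same'' as Proposition~1.14 of \cite{Al-1992} and Theorem~10.3 of \cite{ChPiSh-07} and omits it. Your proposal follows exactly that classical scheme --- compactness, identification of the limit of $u_\ve$ via a transverse Poincar\'e inequality plus Rellich for cross-sectional averages (the same device the paper itself uses at the end of the proof of Theorem~\ref{th:main}), and identification of the limit of $\nabla u_\ve$ by testing against $y$-divergence-free fields with vanishing normal trace on $\partial\Box(x_1)$ and invoking the Helmholtz/de Rham duality. So in spirit it is the same route; its added value is that it makes explicit the two places where ``essentially the same'' hides genuine work. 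The integration by parts over $\Oe$ produces a surface term on $\Se$, because the solenoidal test fields cannot be taken compactly supported in $y'$ (compact support would miss the $e_1$-component of the cohomology of $\mathbf{T}^1\times\rr^{d-1}$ and leave an unremovable $c(x_1)e_1$ in the limit), and your observation that this surface term is exactly the boundary contribution in the Reynolds identity for the $x_1$-dependent cell, recombining with the bulk term into $-\int_I u\,\frac{d}{dx_1}\big(\int_{\Box(x_1)}\Psi_1\,dy\big)\,dx_1$, is correct; so is your expansion of $n_\ve$ on $\Se$.

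Two steps remain at the level of assertion. First, the convergence of the $\Se$-term to $\int_I u\int_{\partial\Box(x_1)}\Psi_1\,\partial_{x_1}F/|\nabla_yF|\,dS\,dx_1$ needs a mean-value (two-scale) property for the rescaled surface measure $\ve^{-(d-2)}dS$ on $\Se$, together with a trace estimate allowing $u_\ve$ to be replaced by $u(x_1)$ there; this is provable from part~(i), the gradient bound and a scaled trace inequality on each periodicity cell, but it is a separate lemma that neither you nor the paper states. Second, the existence of a ball $K$ contained in every cross section and the uniformity of the transverse Poincar\'e constant rely on reading (F3) as $F(x_1,y_1,0)=1$, so that $0$ is an interior point of each $Q(x_1,y_1)$ with uniform margin; as literally written the hypothesis is ambiguous, but that is the paper's imprecision rather than a flaw in your argument. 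With these two points supplied, the proof is complete.
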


\section{Homogenization of a linear elliptic operator with locally periodic coefficients}
\label{linear-case}
Let us illustrate how one can apply the adapted notion of the two-scale convergence to the asymptotic analysis of a linear second-order elliptic operator with locally periodic coefficients stated in a thin domain with locally periodic rapidly oscillating boundary. Let the domain be that described in Section \ref{spaces}. To fix the ideas, let us consider the following boundary value problem
\begin{align}
\label{eq:linear-eq}
-\di{a^\ve \nabla u_\ve} + c^\ve u_\ve & = f, \quad \Oe,\nonumber \\
a^\ve \nabla u_\ve \cdot n &= 0, \quad \Se,\\
u_\ve &=0, \nonumber \quad \Ge.
\end{align} 
Our main assumptions are
\begin{itemize}
 \item[(H1)]
 The coefficients have the form $a^\ve(x) = a(x_1, \frac{x}{\ve})$, $c^\ve(x) = c(x_1, \frac{x}{\ve})$, where $c(x_1,y), a_{ij}(x_1, y) \in C^{1, \alpha}(\overline I; C^\alpha(\overline \Box(x_1)))$ are $1$-periodic in $y_1$, $0<\alpha<1$.
 \item[(H2)]
 The matrix $a$ is symmetric and satisfies the uniform ellipticity condition: There exists $\Lambda_0>0$ such that for all $x_1 \in I$ and $y \in \Box(x_1)$, 
 $$
 a_{ij}(x_1,y) \xi_i\xi_j \ge \Lambda_0 |\xi|^2, \quad \xi \in \rr^d.
 $$
 \item[(H3)]
$f(x_1) \in L^2(I)$.
\end{itemize}
We study the asymptotic behaviour of the solution $u_\ve$ of \eqref{eq:linear-eq} as $\ve \to 0$. 


Problem \eqref{eq:linear-eq} being stated in a bulk domain is classical and can be treated by any method of asymptotic analysis. We present the convergence result in the case when the domain is thin and has a locally periodic rapidly varying thickness using singular measures approach. Corrector terms, as well as the estimates for the rate of convergence can be obtained for example by using the asymptotic expansion method.  

\begin{theorem}
\label{th:main}
Let $u^\ve$ be a solution of problem \eqref{eq:linear-eq}.
Under the assumptions (H1)--(H3), the following convergence result holds:
\begin{enumerate}[(i)]
\item $u^\ve$ converges two-scale, as $\ve \to 0$, in $L^2(\rr^d, \mu_\ve)$ to a solution $u$ of the one-dimensional problem
\begin{align}
\label{eq:eff-prob-remark-1}
- (a^\eff(x_1) u')' + \bar{c}(x_1)\, u &=  |\Box(x_1)|\, f(x_1), \quad \hfill x \in (-L,L),
\\
u(\pm L)&=0.\nonumber
\end{align}
The effective diffusion coefficient  $a^\eff$ and the potential $\bar{c}$ are given by the formulae
\begin{align*}
a^\eff(x_1) & = \int_{\Box(x_1)} a_{1j}(x_1, y)(\delta_{1j} + \partial_{y_j} N_1(x_1, y))\, dy,\\
\bar{c}(x_1) & =  \int_{\Box(x_1)} c(x_1, y)\, dy.
\end{align*}
The auxiliary function $N_1(x_1, y)$ solves the following cell problem:
\begin{align*}
\left\{
\begin{array}{lcr}
\displaystyle
- {\rm{div}}_y (a(x_1,y)\nabla_y N_1(x_1, y)) = \partial_{y_i} a_{i 1}(x_1, y), \quad \hfill y \in \Box(x_1),
\\[2mm]
\displaystyle
a(x_1, y)\nabla_y N_1(x_1, y)\cdot n = - a_{i 1}(x_1, y)\, n_i, \quad \hfill y \in \partial \Box(x_1).
\end{array}
\right.
\end{align*}

\item
$
\disp
\lim \limits_{\ve \to 0}\frac{1}{\ve^{d-1}}\, \int_{\Omega_\ve} |u^\ve(x)-u(x_1)|^2\, dx = 0.
$
\item
As $\ve \to 0$, the corresponding fluxes converge two-scale in $L^2(\rr^d, \mu_\ve)$:
\begin{align*}
a^\ve(x) \nabla u^\ve  \overset{2}{\rightharpoonup} a^\eff(x_1) u'(x_1) e_1 + \nabla_y N(x_1,y) u'(x_1), \quad e_1 = (1,0,\cdots, 0) \in \rr^d.
\end{align*}

\end{enumerate}
\end{theorem}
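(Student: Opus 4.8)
The plan is to follow the standard two-scale convergence strategy adapted to the singular measure $\mu_\ve$, in four stages: a priori estimates, passage to the two-scale limit in the weak formulation, identification of the cell problem and the limit equation, and finally the strong convergence and flux statements.

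\textbf{Step 1: A priori estimates.} First I would write the weak formulation of \eqref{eq:linear-eq}: find $u_\ve \in V_\ve := \{v \in W^{1,2}(\Oe) : v|_{\Ge} = 0\}$ such that $\int_{\Oe} a^\ve \nabla u_\ve \cdot \nabla v\, dx + \int_{\Oe} c^\ve u_\ve v\, dx = \int_{\Oe} f v\, dx$ for all $v \in V_\ve$. Testing with $v = u_\ve$, using (H2), the boundedness of $c$, and Cauchy--Schwarz together with a Poincar\'e inequality on $\Oe$ (available because of the Dirichlet condition on $\Ge$ and the fact that the cylinder has length $2L$ in $x_1$; one gets $\|u_\ve\|_{L^2(\Oe)} \le C\|\partial_{x_1} u_\ve\|_{L^2(\Oe)}$ with $C$ independent of $\ve$), I obtain $\ve^{-(d-1)}(\|u_\ve\|_{L^2(\Oe)}^2 + \|\nabla u_\ve\|_{L^2(\Oe)}^2) \le C$, i.e. $u_\ve$ is bounded in $W^{1,2}(\rr^d,\mu_\ve)$.

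\textbf{Step 2 and 3: Passage to the limit and identification.} By Lemma \ref{lm:two-scale} there exist $u(x_1) \in W^{1,2}(\rr^d,\mu_\ast)$ and $u_1(x_1,y) \in L^2(\rr; W^{1,2}(\Box(x_1)))$, $1$-periodic in $y_1$, such that $u_\ve \to u$ strongly two-scale and $\nabla u_\ve \overset{2}{\rightharpoonup} \nabla^{\mu_\ast}u(x_1) + \nabla_y u_1(x_1,y)$; by the structure of $\mu_\ast$-gradients recalled in Section \ref{spaces}, $\nabla^{\mu_\ast}u = (u'(x_1),0,\dots,0) = u'(x_1)e_1$ after absorbing the gradient-of-zero part into $\nabla_y u_1$. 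I then take oscillating test functions of the form $v_\ve(x) = \varphi(x_1) + \ve\, \varphi(x_1)\psi(\tfrac{x}{\ve})$ with $\varphi \in C_0^\infty(I)$ and $\psi$ smooth $1$-periodic in $y_1$; these lie in $V_\ve$ up to the Dirichlet condition which $\varphi$ already satisfies. Passing to the two-scale limit using the mean-value property and the admissibility of $a(x_1,y)$ as a test function (it is continuous in $x_1$, hence admissible), the $c^\ve u_\ve v_\ve$ term gives $\int_I \bar c(x_1) u\varphi\, dx_1$ and the right-hand side gives $\int_I |\Box(x_1)| f\varphi\, dx_1$. The diffusion term yields the two-scale variational identity
\begin{align*}
\int_I \int_{\Box(x_1)} a(x_1,y)\big(u'(x_1)e_1 + \nabla_y u_1\big)\cdot\big(\varphi'(x_1)e_1 + \nabla_y(\varphi\psi)\big)\,dy\,dx_1 = \int_I \bar c u\varphi\,dx_1 - \int_I|\Box|f\varphi\,dx_1 \cdot(-1),
\end{align*}
with sign bookkeeping to be done carefully. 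Choosing $\varphi = 0$ first isolates the cell equation: $-{\rm div}_y(a\nabla_y u_1) = {\rm div}_y(a\, u'(x_1)e_1)$ in $\Box(x_1)$ with the conormal boundary condition on $\partial\Box(x_1)$, whose solution is $u_1(x_1,y) = N_1(x_1,y)u'(x_1)$ by linearity, where $N_1$ solves exactly the stated cell problem. Substituting back and choosing $\psi = 0$ gives the effective equation $-(a^\eff u')' + \bar c u = |\Box(x_1)|f$ with $a^\eff$ as stated; well-posedness of this one-dimensional problem (it is uniformly elliptic since $a^\eff(x_1) \ge \Lambda_0\,|\Box(x_1)|/|\dots| > 0$ by the standard argument that $a^\eff = \int_{\Box} a(e_1+\nabla_y N_1)\cdot(e_1+\nabla_y N_1)\,dy$) shows $u$ is uniquely determined, hence the whole sequence converges.

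\textbf{Step 4: Strong convergence and fluxes.} Assertion (ii) is precisely strong two-scale convergence of $u_\ve$ to the $y$-independent limit $u(x_1)$, which is part of Lemma \ref{lm:two-scale}(i), rewritten in terms of the unscaled integral over $\Oe$. For (iii), I would pass to the two-scale limit in $a^\ve\nabla u_\ve$ directly: since $a(x_1,y)$ is admissible and $\nabla u_\ve \overset{2}{\rightharpoonup} u'(x_1)e_1 + \nabla_y N_1(x_1,y)u'(x_1)$, the product converges two-scale to $a(x_1,y)(e_1 + \nabla_y N_1)u'(x_1)$; one then checks this equals $a^\eff(x_1)u'(x_1)e_1 + \nabla_y N(x_1,y)u'(x_1)$ in the two-scale sense after using the cell equation (the difference is divergence-free in $y$ and $L^2$-orthogonal to gradients, matching the notation $N = N_1$ in the statement). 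The main obstacle I anticipate is Step 1's $\ve$-uniform Poincar\'e inequality on the shrinking, oscillating domain $\Oe$ — one must be careful that the constant does not blow up as the thickness degenerates, which here is fine because the Dirichlet condition is imposed on the \emph{ends} $\Ge$ and the domain retains length $2L$ in the $x_1$-direction, so a one-dimensional Poincar\'e inequality in $x_1$ (integrated over the cross-section) does the job; the secondary technical point is justifying that $v_\ve$ is an admissible test function in $V_\ve$ and that the corrector term $\ve\varphi\psi(x/\ve)$ and its gradient have the right two-scale behavior on the oscillating domain, which follows from the mean-value property established after Lemma \ref{lm:conv-measures}.
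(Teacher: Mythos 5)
Your proposal is correct and follows the same overall strategy as the paper (a priori estimate, two-scale compactness via Lemma \ref{lm:two-scale}, oscillating test functions to identify the corrector, slow test functions for the effective equation, positivity of $a^\eff$ from the symmetric form), but it handles the degeneracy of the $\mu_\ast$-gradient differently, and that is worth spelling out. You normalize $\nabla^{\mu_\ast}u = u'(x_1)e_1$ at the outset by ``absorbing the gradient-of-zero part into $\nabla_y u_1$''; this is legitimate, but it uses a small fact you should state: the gradients of zero have the form $(0,\psi_2(x_1),\ldots,\psi_d(x_1))$, and each such vector equals $\nabla_y\bigl(\sum_{j\ge 2}\psi_j(x_1)\,y_j\bigr)$, where $\sum_{j\ge 2}\psi_j(x_1)y_j$ is $y_1$-periodic and lies in $L^2(\rr;W^{1,2}(\Box(x_1)))$ because the cells are uniformly bounded. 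With that observation your route needs only the single cell problem for $N_1$ and immediately produces the stated one-dimensional equation. The paper instead keeps the full (non-unique) $\nabla^{\mu_\ast}u$, solves cell problems for all $N_k$, forms the matrix $A^\eff$, argues that the flux $A^\eff\nabla^{\mu_\ast}u$ is determined by orthogonality to the gradients of zero, and then shows $A^\eff_{km}=0$ for $m\neq 1$ by testing the cell problem with $y_m$ ($m\neq 1$ being periodic in $y_1$). Your normalization buys a shorter derivation; the paper's version buys the structural information about $A^\eff$ and an explicit discussion of in what sense the limit problem is well posed despite the non-uniqueness of the gradient. A second difference: for assertion (ii) the paper does not invoke Lemma \ref{lm:two-scale}(i) but gives a self-contained argument via the cross-sectional average $\overline u_\ve(x_1)$ and a transverse Poincar\'e inequality giving $\|u_\ve-\overline u_\ve\|_{L^2(\Oe)}^2\le C\ve^2\|\nabla u_\ve\|_{L^2(\Oe)}^2$; your citation of the lemma is acceptable, though passing from strong convergence in the variable-space (duality) sense to $\int|u_\ve-u|^2\,d\mu_\ve\to 0$ requires expanding the square and using that $u\in H^1(I)\subset C(\overline I)$ is an admissible test function.

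Two small points to tighten. First, ``choosing $\varphi=0$'' in your combined test function $v_\ve=\varphi+\ve\varphi\psi(x/\ve)$ annihilates the whole test function; what you mean (and what the paper does) is to test separately with $\ve\varphi\psi(x/\ve)$, whose only surviving contribution in the limit is through $\varphi\nabla_y\psi$, which isolates the cell identity. Second, in (iii) the two-scale limit of $a^\ve\nabla u_\ve$ is the specific function $a(x_1,y)(e_1+\nabla_yN_1(x_1,y))u'(x_1)$; ``equality up to a divergence-free field'' is not equality of two-scale limits, so either identify the limit as this function or note that the averaged first component reproduces $a^\eff u'$. Neither point is a gap in the method.
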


\begin{proof}
The weak formulation of \eqref{eq:linear-eq} in terms of the measure $\mu_\ve$ reads
\begin{align}
\label{eq:var-formul-measures}
\ii{\rr^d} a^\ve \nabla u_\ve \cdot \nabla \Phi \, d\mu_\ve + \ii{\rr^d} c^\ve u_\ve \Phi \, d\mu_\ve = \ii{\rr^d} f \Phi \, d\mu_\ve, 
\end{align}
where $\quad \Phi \in H^1(\Oe), \Phi\big|_{\Ge}=0$. Taking $u_\ve$ as a test function we obtain the following a priori estimate:
\begin{align}
\label{eq:apriori-linear}
\|u_\ve\|_{L^2(\rr^d, \mu_\ve)} + \|\nabla u_\ve\|_{L^2(\rr^d, \mu_\ve)} \le C.
\end{align}
Thus, up to a subsequence,  $u_\ve$ converges two-scale weakly in $L^2(\rr^d, \mu_\ve)$ to some $u(x_1)\in L^2(\rr^d,\mu_\ast)$, and due to Lemma \ref{lm:two-scale}, there exists $u_1(x_1,y)\in L^2(\rr; H^1(\Box(x_1)))$ periodic in $y_1$ such that $\nabla u_\ve$ converges two-scale  in $L^2(\rr^d, \mu_\ve)$ to $\nabla^{\mu_\ast} u(x_1) + \nabla_y u_1(x_1,y)$.

We proceed in two steps.
First we choose an oscillating test function to determine the structure of $u_1(x_1,y)$.
Then we use a smooth test function of a slow argument to obtain the limit problem for $u$.

Let us take
\begin{align*}
\Phi_\ve(x) & = \ve\, \varphi(x)\, \psi(\frac{x}{\ve}), \quad \varphi \in C_0^\infty(\rr^d), \,\,\, \psi \in C^\infty(\mathbb T^1 \times \rr^{d-1}),
\end{align*}
as a test function in (\ref{eq:var-formul-measures}).

The gradient of $\Phi_\ve$ takes the form
\begin{align*}
\nabla \Phi_\ve(x) = \ve\, \psi(\frac{x}{\ve}) \nabla_x \varphi(x)  +
\varphi(x)\, \nabla_y \psi(y)\big|_{\zeta=x/\ve}.
\end{align*}
In the first term on the left hand side in  (\ref{eq:var-formul-measures}) we can regard $a^\ve$ as a part of the test function.
Passing to the limit we get
\begin{align*}
& \quad \int_{\mathbf{R}^d} \Big(\frac{1}{|\Box(x_1)|}  \int_{\Box(x_1)} a(x_1,y)\nabla_y \psi(y)\,dy\Big) \cdot \nabla^{\mu_\ast} u(x_1, 0)\, \varphi(x_1, 0) d\mu_\ast \\
& \qquad + \int_{\mathbf{R}^d} \Big(\frac{1}{|\Box(x_1)|}  \int_{\Box(x_1)} a(x_1,y)\nabla_y \psi(y)\cdot \nabla_y u_1(x_1, y)\,dy\Big) \, \varphi(x_1, 0) d\mu_\ast=0.
\end{align*}
\item


Looking for $u_1$ in the form
\be
\label{eq:v^1}
u_1(x_1, y)= N(x_1,y) \cdot \nabla^{\mu_\ast} u(x_1, 0)
\ee
gives the following relation for the components of $N(y)$:
\begin{align*}
& \int_{\mathbf{R}^d} \Big(\frac{1}{|\Box(x_1)|}  \int_{\Box(x_1)} a(x_1,y)\nabla_y N_k(y)\cdot \nabla \psi(y)\, dy\Big) \varphi(x_1, 0)\, d \mu_\ast\\
& \quad = - \int_{\mathbf{R}^d} \Big(\frac{1}{|\Box(x_1)|}  \int_{\Box(x_1)} a_{kj}(x_1,y)\partial_{y_j} \psi(y)\, dy\Big) \varphi(x_1, 0)\, d \mu_\ast,
\end{align*}
for any $\varphi \in C_0^\infty(\mathbf{R}^d)$, $\psi \in C^\infty(\mathbf T^1 \times \rr^{d-1})$.
The last integral identity is a variational formulation associated to
\be
\label{eq:N-k}
\left\{
\begin{array}{lcr}
\displaystyle
- {\rm{div}}_y(a(x_1,y)\nabla_y N_k(x_1, y)) = \partial_{y_i} a_{i k}(x_1, y), \quad \hfill y \in \Box(x_1),
\\[1.5mm]
\displaystyle
a(x_1, y)\nabla_y N_k(y)\cdot n = -  a_{i k}(x_1, y)\, n_i, \quad \hfill y \in \partial \Box(x_1),\quad k = 1, 2, \ldots.
\end{array}
\right.
\ee
For each $x_1 \in I$, there exists a unique solution
$N_k(x_1, \cdot) \in C^{1,\alpha}(I; C^{1,\alpha}\overline{ \Box(x_1)})/\mathbf R$ to~\eqref{eq:N-k}.

In this way
\begin{align*}
\nabla u_\ve \overset{2}{\rightharpoonup} (\nabla^{\mu_\ast} u(x_1, 0) + \nabla_y N(x_1,y)\cdot \nabla^{\mu_\ast} u(x_1, 0)),\quad \ve \to 0.
\end{align*}
Now the structure of the function $v^1(z_1, \zeta)$ is known, and we can proceed by deriving the problem for $u$.

We pass to the limit in the integral identity (\ref{eq:var-formul-measures}) with $\varphi(x) \in C_0^\infty(\mathbf R^d)$:
\begin{align*}
& \int_{\mathbf R^d} \Big( \frac{1}{|\Box(x_1)|}  \int_{\Box(x_1)} a(x_1, y)({\mathrm{Id}} + \nabla_y N(x_1, y))\, dy\Big) \nabla^{\mu_\ast} u(x_1, 0)\cdot \nabla \varphi(x_1, 0)\,d\mu_\ast
 \\
& \quad + \int_{\mathbf R^d} \frac{1}{|\Box(x_1)|}  \int_{\Box(x_1)}c(x_1,y) u(x_1, 0)\varphi(x_1, 0)\, dy d\mu_\ast \\
 & \qquad = \int_{\mathbf R^d} f(x_1,0)\, \varphi(x_1, 0)\, d\mu_\ast.
\end{align*}
 Here $\nabla N =\{\partial_{\zeta_i}N_j(\zeta)\}_{ij=1}^d$, and $\mathrm{Id} = \{\delta_{ij}\}_{ij=1}^d$ is the unit matrix.
Denote
\begin{align*}
A_{ij}^\eff =   \int_{\Box(x_1)} a_{ik}(x_1, y)(\delta_{kj} + \partial_{y_k}N_j(x_1,y))\, dy.
\end{align*}
In this way the limit problem in the weak form reads
\begin{align}
& \int_{\mathbf R^d} \frac{1}{|\Box(x_1)|} A^\eff \nabla^{\mu_\ast} u(x_1, 0)\cdot \nabla \varphi(x_1, 0)\,d\mu_\ast
 + \int_{\mathbf R^d} \frac{1}{|\Box(x_1)|} \overline c(x_1) u(x_1, 0)\varphi(x_1, 0)\, d \mu_\ast \notag
 \\
& \quad = \int_{\mathbf R^d} f(x_1,0)\, \varphi(x_1, 0)\, d\mu_\ast.
\label{eq:eff-weak-full}
\end{align}
The $\mu_\ast$-gradient is not unique, but the flux $A^\eff \nabla^{\mu_\ast} u(x_1, 0)$ is uniquely determined by the condition of orthogonality of the vector $A^\eff \nabla^{\mu_\ast} u$ to the subspace of the gradients of zero. This can be seen by taking in \eqref{eq:eff-weak-full} any test function with zero trace $\varphi(x_1, 0, \ldots,  0)=0$ and non-zero $\mu_\ast$-gradient, for example $\varphi(x)= \sum_{j\neq 1} x_j \psi_j(x_1)$ with arbitrary $\psi_j \in C_0^\infty(\mathbf R)\setminus \{0\}$. By the density of smooth functions, the subspace of vectors in the form $(0, \psi_2(x_1), \ldots, \psi_d(x_1))$, $\psi_j \in L^2(\mathbf R)$ is the subspace of the gradients of zero, and the condition of orthogonality to the gradients of zero gives that
$$
A^\eff \nabla^{\mu_\ast} u = (A_{1j}^\eff \partial_{x_j}^{\mu_\ast} u(x_1, 0), 0, \ldots, 0).
$$
If we define a solution of \eqref{eq:eff-weak-full} as a function $u(x) \in H^1(\mathbf R^d, \mu_\ast)$ satisfying the integral identity, then this solution is unique. A solution $(u, A^\eff \nabla^{\mu_\ast}u)$, as a pair,  is also unique due to the orthogonality to the gradients of zero. If one, however, defines a solution of \eqref{eq:eff-weak-full} as a pair $(u, \nabla^{\mu_\ast}u)$, then a solution is not unique. This has to do with the fact that the matrix $A^\eff$ is not positive definite, and the uniqueness of the flux does not imply the uniqueness of the gradient.

Next step is to prove that $A_{1j}^\eff = 0$ for all $j \neq 1$. To this end we rewrite the problem for $N_k$ in the following form:
\be
\label{eq:N-k-bis}
\left\{
\begin{array}{lcr}
\displaystyle
- {\rm{div}}_y(a(x_1,y)\nabla_y (N_k(x_1,y) + y_k) = 0, \quad \hfill y \in \Box(x_1),
\\
\displaystyle
a(x_1, y)\nabla_y (N_k(x_1,y) + y_k)\cdot n = 0, \,\,k = 1, 2, \ldots, \quad \hfill y \in \partial \Box(x_1).
\end{array}
\right.
\ee
We multiply \eqref{eq:N-k-bis} by $y_m$, $m \neq 1$, and integrate over $\Box(x_1)$. For $m \neq 1$, the function $y_m$ is periodic in $y_1$ and can be used as a test function. This gives
\begin{align*}
\int_{\Box(x_1)} a(x_1, y)\nabla_y (y_k + N_k(x_1,y)) \cdot \nabla y_m \, dy = 0,
\end{align*}
and since $\partial_{y_j} y_m = \delta_{jm}$, $A_{km}^\eff = 0$ for any $k=1, \ldots, d$ and $m \neq 1$.
Thus
\begin{align*}
A^\eff \nabla^{\mu_\ast} u = (A_{11}^\eff u'(x_1, 0), 0, \ldots, 0),
\end{align*}
and \eqref{eq:eff-weak-full} takes the form
\begin{align*}
& \int_{\mathbf R} A_{11}^\eff u'(x_1, 0) \varphi'(x_1, 0)\,dx_1
 + \int_{\mathbf R} \overline c(x_1) u(x_1, 0)\varphi(x_1, 0)\, d x_1 \notag
 \\
& \quad = \int_{\mathbf R} f(x_1,0)\, |\Box(x_1)|\varphi(x_1, 0)\, dx_1.
\end{align*}
Denoting $a^\eff = A_{11}^\eff$, $u(x_1)= u(x_1, 0)$, we see that the last integral identity is the weak formulation of (\ref{eq:eff-prob-remark-1}).

Using $N_i$ as a test function in \eqref{eq:N-k-bis} gives
\[
A_{ik}^\eff(x_1) = \int_{\Box(x_1)} a(x_1, \zeta)\nabla (y_i + \nabla_y N_i(x_1,y))\cdot \nabla_y (y_k + \nabla_y N_k(x_1,y))\, dy,
\]
which shows that $A^\eff$ is symmetric and positive semidefinite due to the corresponding properties of $a(x_1, y)$. If $e_1 = (1, 0,\ldots, 0)$,
\[
a^\eff = A_{11}^\eff = A^\eff e_1 \cdot e_1 \ge \Lambda_0 \int_{\Box(x_1)} |\nabla_y (y_1 + \nabla_y N_1(x_1,y))|^2\, dy.
\]
Assuming that $\partial_{y_i} (y_1 + \nabla_y N_1(x_1,y)) = 0$ for all $i$, leads to a contradiction since $N_1$ is periodic in $y_1$. Thus, the effective coefficient $a^\eff$ is strictly positive. 

It is left to prove the strong convergence of $u_\ve$ in $L^2(\Oe, \mu_\ve)$. To this end we consider the local average of $u_\ve$
\begin{align*}
\overline u_\ve(x_1) = \frac{1}{\ve^{d-1} |Q(x_1, x_1/\ve)|}\ii{\ve Q(x_1, x_1/\ve)} u_\ve(x) dx'.
\end{align*}
Applying the Poincar\'e inequality we obtain
\begin{align*}
\ii{\ve Q(x_1, x_1/\ve)} (u_\ve - \overline u_\ve)^2 dx' \le C \ve^2 \ii{\ve Q(x_1, x_1/\ve)} |\nabla(u_\ve - \overline u_\ve)|^2 dx'.
\end{align*} 
Integrating with respect to $x_1$, using (\ref{eq:apriori-linear}) and the definition of $\overline u_\ve$, we have
\begin{align}
\label{eq:1}
\ii{\Oe} (u_\ve - \overline u_\ve)^2 dx \le C\ve^2 \ii{\Oe} |\nabla(u_\ve - \overline u_\ve)|^2 dx \le C\ve.
\end{align}
At the same time, since $\overline u_\ve$ is bounded in $H^1(I)$, it converges strongly in $L^2(\rr^d, \mu_\ast)$ (equivalently in $L^2(I)$) to some $\overline u(x_1)$, which together with (\ref{eq:1}) gives the strong convergence of $u_\ve$ in $L^2(\Oe, \mu_\ve)$ to $u(x_1)=\overline u(x_1)$. 
\end{proof}

\printbibliography

\end{document}